\numberwithin{equation}{section}
\newtheorem{thm}{Theorem}[section]
\newtheorem{lemma}[thm]{Lemma}
\newtheorem{rmk}[thm]{Remark}
\newtheorem{pro}[thm]{Proposition}
\theoremstyle{remark}
\theoremstyle{definition}
\newtheorem{defn}[thm]{Definition}
\newcommand{\eps}{\varepsilon}
\newcommand{\gammalim}{\Gamma{\text -}\mkern-1mu\lim}
\newcommand{\abs}{|}
\newcommand\blfootnote[1]{%
  \begingroup
  \renewcommand\thefootnote{}\footnote{#1}%
  \addtocounter{footnote}{-1}%
  \endgroup
}
\newcommand\restr[2]{{
		\left.\kern-\nulldelimiterspace 
		#1 
		\vphantom{\Big|} 
		\right|_{#2} 
}}
\title{
\Large A note on the homogenization of incommensurate thin films}
\date{}
\author{Irene Anello, Andrea Braides and Fabrizio Caragiulo
\\ 
SISSA, via Bonomea 265, Trieste, Italy}
\begin{document}
\maketitle
\blfootnote{Preprint SISSA 22/2022/MATE}

\begin{abstract}
Dimension-reduction homogenization results for thin films have been obtained under hypotheses of periodicity or almost-periodicity of the energies in the directions of the mid-plane of the film. In this note we consider thin films, obtained as sections of a periodic medium with a mid-plane that may be incommensurate; that is, not containing periods other than $0$. A geometric almost-periodicity argument similar to the cut-and-project argument used for quasicrystals allows to prove a general homogenization result.
\end{abstract}

\def\e{\varepsilon}

\section{Introduction}
The energy of heterogeneous thin films of a hyperelastic material in $\mathbb{R}^{d+1}$ can be described by integral functionals of the form
\begin{equation}\label{heterog-e}
\frac{1}{2\eps}\int_{\omega\times(-\eps,\eps)} f_\e(x, \nabla u )\,\mathrm{d}x, 
\end{equation}
where $f_\e$ are hyperelastic energy densities satisfying suitable growth conditions, $\omega\subset \mathbb{R}^{d}$ is the middle section of the thin film, $2\e$ is its thickness, and $u:\omega\times(-\eps,\eps)\to \mathbb R^m$ is the  displacement (in the physical case, $d=2$ and $m=3$). We use $\mathrm{d}x$ to denote integration with respect to the Lebesgue measure in $\mathbb{R}^{k}$ both for $k=d$ and $k={d+1}$, which one being clear from the context. 
In the seminal paper by Le Dret and Raoult \cite{LDR}, in the case  $f_\e=f$ independent of $\e$ and homogeneous (i.e., $x$-independent), the behaviour of such energies as $\e\to 0$ has been proven to be described by a dimensionally reduced hyperelastic energy of the form
\begin{equation}\label{homog-re}
\int_{\omega} f^{\rm LDR}(\nabla v)\,\mathrm{d}y,
\end{equation}
where now $v:\omega\to \mathbb R^m$  has a domain identified as the mid-section of the film, and $f^{\rm LDR}$ is a quasiconvex function explicitly described from $f$.
For general energies \eqref{heterog-e} a compactness theorem \cite{bff} ensures that, up to subsequences, their behaviour can be described by a possibly heterogeneous dimensionally reduced energy
\begin{equation}\label{homog-hom}
\int_{\omega} f_0(y,\nabla v)\,\mathrm{d}y,
\end{equation}
with $f_0$ possibly depending on the subsequence. 
In the case of a periodic integrand we can suppose, as this is the relevant case, that the oscillation be at the same scale of the thickness, so that we assume $f_\e(x,A)= f\bigl({x\over\e},A\bigr)$ with $f(\cdot,A)$ periodic in the coordinate directions. In this case we have {\em homogenization} \cite{bff}; that is, the limit $f_0$ is homogeneous and independent of the subsequence of $\e$, and described by a suitable asymptotic formula. In order to prove this result, a key argument is the invariance property of the energies by translations of the form $\e e_j$ for $j\in\{1,\ldots, d\}$, where $\{  e_i: i\in \{1,\ldots, d+1\}\}$ is the canonical orthonormal basis of $\mathbb{R}^{d+1}$, entailing the translation invariance of $f_0$. This argument is made possible by the assumption that the middle plane of the thin film contains a $d$-dimensional lattice of periods for $f$; that is, it is `commensurate' with $\mathbb{Z}^{d+1}$. 

In this paper we consider the general case of a periodic energy density $f$, when the middle plane of the thin film contains a $n$-dimensional lattice of periods for $f$ with $n\le d$. `Incommensurate' thin films are those with $n=0$; that is, when the middle plane of the thin film does not contain any period for $f$ except $0$. 

In order to treat general, possibly incommensurate, thin films, we introduce some notation, slightly different from the one used for commensurate thin films, due to the necessity to distinguish between the periods of the energy density and the directions of the thin film. We choose to maintain $\mathbb{Z}^{d+1}$ as set of periods, considering a Carath\'eodory function 
$\widetilde f:\mathbb{R}^{d+1} \times \mathbb{M}^{m\times{(d+1)}} \to \left[0,+\infty\right)$  satisfying the \emph{standard p-growth conditions} 
\begin{equation}\label{eq:GrowthCondition}
    \alpha|A|^p \leq\widetilde f(x,A) \leq \beta(1+|A|^p)
\end{equation}
for all $x \in \mathbb{R}^{d+1}$ and $A \in \mathbb{M}^{m \times {d+1}}$ and some $\alpha,\beta>0$,
which is $1$-periodic in all coordinate directions; i.e.,
\begin{equation}\label{eq:periodicity}
  \widetilde f(x+ {e}_i,A) = \widetilde f(x,A)
\end{equation}
for all $x\in \mathbb{R}^{d+1}$, all matrices $A\in \mathbb{M}^{m\times{(d+1)}}$ and all vectors $ {e}_i$ of the standard orthonormal basis of $\mathbb{R}^{d+1}$.  We use the notation $\mathbb{M}^{m\times k}$ for the space of ${m\times k}$ matrices with real entries.

We fix a unit vector $\nu$ and consider the hyperplane $\Pi=\{x: \langle x,\nu\rangle=0\}$.
The interesting case, to keep in mind as the most relevant one, is when $\Pi$ is an \emph{irrational} hyperplane in $\mathbb{R}^{d+1}$; that is, such that
\begin{equation}\Pi \cap \mathbb{Z}^{d+1} = \{0\}. \end{equation}
We fix $\widetilde{\omega}$ a bounded subset of $\Pi$, open in the relative topology, $h>0$, and for each $\e>0$ consider the functional
\begin{equation}\label{eq:funcdef}
    \widetilde{I}_{\eps}(u) = \frac{1}{2h\eps}\int_{\widetilde{\omega}+(-h\eps,h\eps)\nu}\widetilde f\Big(\frac{x}{\eps}, \nabla
    {u}\Big)\,
    \mathrm{d}x,
\end{equation}
with domain $\mathrm{W}^{1,p}(\widetilde{\omega}+(-h\eps,h\eps)\nu,\mathbb{R}^m)$,
which ideally represents the elastic free energy of a thin film of size $2h\eps>0$ around $\widetilde{\omega}$. 
The introduction of a constant $h>0$ amounts to assuming that thickness and periods are comparable, so that $h$ represents their ratio, which we highlight for possible future reference. 
We will prove that there exists a function $\widetilde f_{\text{hom}} \colon \mathbb{M}^{m\times d}\to [0,+\infty)$ 
satisfying an asymptotic formula, such that
$$
\gammalim_{\eps\to 0}\widetilde{I}_\eps(u)=\int_{\widetilde{\omega}}\widetilde f_{\text{hom}}\big(\nabla u(y)\big)\,\mathrm d y.$$
The $\Gamma$-limit is performed in a dimension-reduction fashion, using a convergence of $u_\e\in \mathrm{W}^{1,p}(\widetilde{\omega}+(-h\eps,h\eps)\nu,\mathbb{R}^m)$ to $u\in  \mathrm{W}^{1,p}(\widetilde{\omega},\mathbb{R}^m)$, where $\widetilde{\omega}$ is identified with a subset of $\mathbb{R}^d$ in order for the integration to be well defined.

The result is obtained by first resorting to the theory of \cite{bff}. To that end we rewrite the functionals in the form \eqref{heterog-e}, with $f_\e(x,A)= f\smash{\bigl({x\over\e},A\bigr)}$, and $f$ obtained from $\smash{ \widetilde f}$ by a linear change of variables. 
Using the dimension-reduction convergence of functions as in \cite{LDR}, we can then apply the compactness theorem in \cite{bff} to obtain a limit of the form \eqref{homog-hom}. Since $f$ may not be periodic in the coordinate directions we cannot immediately conclude that homogenization takes place, since an invariance-by-translation argument does not apply. However, we can use a sort of {\em geometric almost periodicity} property: the set of periods for $\widetilde f$ which are close to $\Pi$ (closeness suitably quantified by a small parameter) is projected to a uniformly dense set in $\Pi$. We remark that the former set corresponds also to the set of periods for $ f$ which are close to the hyperplane identified with $\mathbb{R}^d$, This argument reminds the {\em cut-and-project} arguments typical of quasicrystalline structures (see \cite{deB,Bou,Fon,WGC,BS,BCS}). 
The existence of such geometric quasi-periods is not sufficient to prove the necessary translation-invariance properties for $f_0$ and the homogenization asymptotic formula. To that end it is necessary to construct test functions by using translation arguments, which are not directly at hand since translation by a quasi-period may exit the thin film domain. The key technical point of the paper is a novel lemma, which ensures that in the constructions of test functions it is sufficient to modify functions defined in a smaller thin film, which is then mapped inside the original thin film by any of the above-mentioned translations (scaled by $\e$).

\subsection{Statement of the results}
We now formalize what we anticipated in the Introduction, rewriting the energies $\widetilde{I}_{\eps}$ in \eqref{eq:funcdef} in order to apply the results in \cite{bff} with more ease. To that end, we make a change to coordinates more suitable to the problem: we let $\phi$ be the linear isometry in $\mathbb{R}^{d+1}$ sending $ {e}_1,\ldots,  {e}_d$ to an orthonormal basis $ {\pi}_1, \ldots,  {\pi}_d$ of $\Pi$ and $ {e}_{d+1}$ to $\nu$. Let $\omega$ be the open set in $\mathbb R^d$ such that $\phi^{-1}(\widetilde{\omega})=\omega\times \{0\}$, so that
\[\phi^{-1}\big(\widetilde{\omega}+(-h\eps,h\eps)\nu\big)=\omega\times (-h\eps,h\eps)\eqqcolon \Omega_\eps\subset \mathbb{R}^d\times \mathbb{R}\]
After this change of variables, setting
\begin{equation}\label{eq:funcdef2}
    {I}_{\eps}(u) = \frac{1}{2h\eps}\int_{\Omega_{\eps}}f\Big(\frac{x}{\eps}, \nabla{u(x)}\Big)\,\mathrm{d}x,
\end{equation}
where $u \in \mathrm{W}^{1,p}(\Omega_\eps,\mathbb{R}^m)$ and, denoted by $R$ the constant matrix equal to $\nabla\phi$, having set
\begin{equation}\label{eq:Integrand}
f(x,A)=\widetilde f(\phi(x), AR ),
\end{equation}
we have 
\begin{equation}\label{eq:equiv}
{I}_{\eps}(u)= \widetilde {I}_{\eps}(\widetilde u),\quad\hbox{  where }\quad\widetilde u(\widetilde x)= u(\phi^{-1}(\widetilde x)).
\end{equation}
Hence, the functionals ${I}_{\eps}$ and $\widetilde {I}_{\eps}$ are equivalent, up to a linear change of variables.
In order to simplify the statement of the convergence, as is customary we scale all functionals to a common domain, 
obtaining
\begin{equation}\label{eq:funcdef3}\begin{split}
    F_\eps(u)&\coloneqq\frac{1}{2h}\int_{\omega\times (-h,h)} f\Big({x\over\e},y, \Big(\nabla_x u(x,y)\Big| {1\over\e}{\partial_y u(x,y)}\Big)\Big)\,\mathrm{d}x\,\mathrm{d}y 
\end{split}\end{equation}
for $u\in W^{1,p}(\omega\times (-h,h),\mathbb R^m)$, so that $F_\eps(\overline u)= I_\e(u)$, where $\overline u(x,y)=u(x,\e y)$.
Here, we have rewritten, with a little abuse of notation, a variable in $\mathbb R^{d+1}$ as a pair $(x,y)$ with $x\in \mathbb R^d$,
and identified a matrix $A\in \mathbb{M}^{m\times {d+1}}$ with a pair which we denote $(A'|\xi)$, where $A'\in \mathbb{M}^{m\times d}$ and $\xi\in \mathbb R^m$ is a (column) vector. The notation $\nabla_xu$ denotes the gradient with respect to the coordinates of $x$. We will keep the standard notation $\nabla u$ when the gradient is performed with respect to all the variables in the domain, be it in $\mathbb R^d$ or $\mathbb R^{d+1}$.
 
\goodbreak
We will use the following notion of convergence.
\begin{defn}[convergence to dimensionally reduced parameters]\label{def:convfun}
A sequence of functions $u_\e$ with $u_\e\in {W}^{1,p}(\Omega_\eps,\mathbb{R}^m)$ {\em converges to} $u\in {W}^{1,p}(\omega,\mathbb{R}^m)$ if the corresponding functions $\overline u_\e$ defined by $\overline u_\e(x,y)=u_\e(x,\e y)$ converge to some $\overline u$ weakly in $W^{1,p}(\omega\times (-h,h),\mathbb R^m)$ and $\overline u(x,y)=u(x)$. 
\end{defn}

We recall that $I_\e$ are equicoercive with respect to this convergence, in the sense that if $I_\e(u_\e)$ is equibounded and $\overline u_\e$ are bounded in $L^p(\Omega\times (-h,h),\mathbb R^m)$ then $u_\e$ converge to some $u$, up to subsequences \cite{bff}.

\begin{defn}[convergence to dimensionally reduced energies]\label{def:convergence_functionals} We say that $I_\e$ $\Gamma$-{\em converges to} $F_0$ with respect to the convergence of $u_\e$ above if the corresponding $F_\e$ $\Gamma$-converge to the functional $\overline F_0$ on functions independent of $y$, defined by $F_0(\overline u)= \overline{F}_0(u)$ if $\overline u(x,y)=u(x)$, with respect to the weak convergence in $W^{1,p}$.
\end{defn}

We can then state the homogenization result as follows.

\begin{thm}[homogenization theorem]\label{thm:MainResult}
Let $\nu$ be a unit vector and $\Pi=\{x: \langle x,\nu\rangle=0\}$ be a hyperplane in $\mathbb R^{d+1}$.
Let $\smash{\widetilde f\colon \mathbb{R}^{d+1}\times \mathbb{M}^{m\times (d+1)}\to \mathbb R}$ be a Carathéodory function satisfying the $p$-growth conditions \eqref{eq:GrowthCondition} for $p>1$ and periodic in the coordinate direction as in \eqref{eq:periodicity}. Let $\phi$ be the linear isometry defined above and let $f\colon \mathbb{R}^{d+1}\times \mathbb{M}^{m\times (d+1)}\to \mathbb R$ be the Carath\'eodory integrand defined by \eqref{eq:Integrand}.
 Let $h>0$, let $\omega$ be an open and bounded subset of $\mathbb R^d$ and let $I_\eps$ be defined by \eqref{eq:funcdef2}, where $\Omega_\e=\omega\times (-h\e,h\e)$. Then $I_\e$ $\Gamma$-converges to $F_{\rm hom}$ with respect to the convergence in Definition {\rm\ref{def:convfun}} as $\e\to 0$, where 
\begin{equation}\label{eq:ffhom}
F_{\rm hom}(u)=\int_\omega f_{\rm hom}(\nabla u)\,\mathrm dx
\end{equation}
for $u\in W^{1,p}(\omega,\mathbb R^m)$, and $f_{\rm hom}\colon \mathbb{M}^{m\times d}\to \mathbb R$ is a quasiconvex function satisfying a $p$-growth conditions and the {\em asymptotic homogenization formula}
        \begin{equation}\label{eq:HomogenizationFormula}
        \begin{split}
            f_{\mathrm{hom}}(A) = \lim_{T \to +\infty}\frac1{T^d} \inf_{u\in \mathcal W_T} \bigg\{ \frac{1}{2h}\int_{(0,T)^d\times(-h,h)}&f\big(x,y,\big(A+\nabla_x u\big| {\partial_y u}\big)\big) \,\mathrm dx\,\mathrm dy \Bigg\}
        \end{split}
        \end{equation}
         where $$\mathcal{W}_T\coloneqq \Big\{
                 u \in \mathrm W^{1,p}\big((0,T)^d\times(-h,h),\,\mathbb{R}^m\big) \colon
                 u=0 \text{ on } \partial\big((0,T)^d\big)\times(-h,h)\Big\}.$$
\end{thm}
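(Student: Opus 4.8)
\section*{Proof proposal}

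\textbf{Overall strategy.} The plan is to reduce the statement to the heterogeneous dimension-reduction compactness theorem of \cite{bff}, and then upgrade the resulting (a priori subsequence-dependent and $y$-independent but possibly $x$-dependent) limit $f_0(x,A)$ to the homogeneous $f_{\rm hom}(A)$ by exploiting the geometric almost-periodicity of the periods of $\widetilde f$ near $\Pi$. Concretely, after the change of variables \eqref{eq:Integrand}, the integrand $f$ satisfies the same $p$-growth \eqref{eq:GrowthCondition}; by \cite{bff} we may extract a subsequence along which $I_\e$ $\Gamma$-converges, in the sense of Definition \ref{def:convergence_functionals}, to a functional of the form $\int_\omega f_0(x,\nabla u)\,\mathrm dx$, with $f_0$ quasiconvex in $A$ and satisfying $p$-growth. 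The three things left to prove are: (i) $f_0$ does not depend on $x$; (ii) $f_0$ is given by the asymptotic formula \eqref{eq:HomogenizationFormula}, hence is independent of the subsequence, so the whole family converges; (iii) the limit has the stated form \eqref{eq:ffhom}.

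\textbf{Step 1: $x$-independence via geometric quasi-periods.} For a small parameter $\delta>0$, the set $L_\delta$ of vectors $k\in\mathbb Z^{d+1}$ with $\mathrm{dist}(k,\Pi)<\delta$ projects onto a $\delta'(\delta)$-dense subset of $\Pi$, with $\delta'(\delta)\to 0$ as $\delta\to 0$ (this is the cut-and-project/Weyl equidistribution fact alluded to in the Introduction); transported through $\phi$, the orthogonal projections onto $\mathbb R^d\times\{0\}$ of these lattice vectors form a uniformly dense set of ``approximate horizontal periods'' of $f$. Given such a $k$, write $\phi^{-1}(k)=(\tau,s)$ with $|\tau|$ bounded and $|s|<\delta$; then $f(\cdot+(\tau,s),\cdot)=f(\cdot,\cdot)$ exactly, while the genuine horizontal translation by $\tau$ differs from this by a vertical shift of size $|s|<\delta$, which is harmless for the rescaled thin film because the $y$-variable lives in $(-h,h)$ and the competitor fields are $W^{1,p}$ in $y$. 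To turn this into translation-invariance of $f_0$ one compares, for a fixed affine datum $Ax$, the $\Gamma$-limit local energies on a ball $B$ and on its translate $B+\tau$: using recovery sequences for one and the invariance $f(\cdot+(\tau,s),\cdot)=f(\cdot,\cdot)$ together with the technical lemma quoted in the Introduction (which lets one build admissible competitors on the smaller, translated thin film that are then pushed back inside $\Omega_\e$ by $\e k$), one obtains $\int_B f_0(x,A)\,\mathrm dx=\int_{B+\tau}f_0(x,A)\,\mathrm dx+o(1)$ as $\delta\to 0$; density of the available $\tau$'s and continuity of $A\mapsto f_0(\cdot,A)$ (a consequence of $p$-growth and quasiconvexity) then force $f_0(x,A)=f_0(A)$ a.e.

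\textbf{Step 2: the asymptotic formula.} Once $f_0$ is known to be homogeneous, identify it with the limit in \eqref{eq:HomogenizationFormula} by the standard two-sided argument. The lower bound $f_0(A)\ge f_{\rm hom}(A)$ follows by localizing a recovery sequence for the affine function $u(x)=Ax$ on an enlarging cube and using it (after the usual cut-off/fundamental-estimate surgery near $\partial((0,T)^d)\times(-h,h)$, with the $p$-growth bound controlling the gluing layer) as a competitor in the cell problem defining $f_{\rm hom}$. The upper bound $f_0(A)\le f_{\rm hom}(A)$ is obtained by the reverse construction: take near-optimal $T$-cell minimizers $u_T\in\mathcal W_T$, extend them by $0$-periodicity of the boundary values to all of $\mathbb R^d$ (possible precisely because $u_T$ vanishes on $\partial((0,T)^d)\times(-h,h)$), rescale by $\e$ and add the affine part to get a recovery sequence for $Ax$ on $\omega$; the existence of the limit in $T$ and its independence of the subsequence are then automatic, so the whole family $I_\e$ $\Gamma$-converges. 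Because $f_0=f_{\rm hom}$ inherits quasiconvexity and $p$-growth from the general compactness theorem, and the limit functional acts on $y$-independent functions, \eqref{eq:ffhom} follows from Definition \ref{def:convergence_functionals}.

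\textbf{Main obstacle.} The delicate point is Step 1, and within it the passage from ``$f$ has an approximate horizontal period $\tau$ (up to a vertical error $<\delta$)'' to an honest comparison of $\Gamma$-limit energies on $B$ and $B+\tau$: a translation by $\e\tau$ in the mid-plane moves a competitor outside $\Omega_\e=\omega\times(-h\e,h\e)$ near $\partial\omega$, and the vertical component $\e s$ of $\e k$, though small, is not zero. This is exactly where the novel lemma advertised in the Introduction is needed — it guarantees that it suffices to modify a competitor on a slightly smaller thin film and then transport it by $\e k$ back into the original domain — and the bookkeeping of the error terms ($o(1)$ as first $\e\to0$ and then $\delta\to0$), together with a diagonal argument over a countable dense set of quasi-periods, is the technical heart of the argument. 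Everything else (the cut-off estimates, the extension of cell minimizers, the quasiconvexity/growth bookkeeping) is routine once this is in place.
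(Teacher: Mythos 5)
Your overall architecture (compactness from \cite{bff}, then $x$-independence of $f_0$ via geometric almost-periods, then identification with the asymptotic formula) is the paper's, and your Step 1 is essentially Proposition \ref{proposition:f0hom}: the translation of a recovery sequence on $B^d_\rho(x')$ by scaled almost-periods $\tau_k\to x''-x'$, with Lemma \ref{Lemma:EstimateOnSlices} used to extend the competitor in the $y$-variable so that the small vertical shift $z_{\tau_k}$ does not push it out of the film.

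The genuine gap is in your Step 2, where you declare that ``the existence of the limit in $T$ and its independence of the subsequence are then automatic'' and that the upper bound follows by extending a near-minimizer $u_T\in\mathcal W_T$ ``by $0$-periodicity of the boundary values to all of $\mathbb R^d$.'' That periodic-extension argument is exactly the step that fails in the incommensurate setting: the extended function is an admissible competitor, but its energy on a translated cell $(Tz+(0,T)^d)\times(-h,h)$ bears no a priori relation to its energy on $(0,T)^d\times(-h,h)$, because $f(\cdot,y,A)$ is \emph{not} periodic in the in-plane variable $x$ (it is only the restriction to an irrational slice of a periodic function of $d+1$ variables). Consequently neither the almost-subadditivity of $T\mapsto g_A(T)$ nor the existence of $\lim_T g_A(T)$ is automatic, and without that existence the standard argument (Γ-convergence of minima with zero boundary data, which correctly gives $f_0(A)=\lim_k g_A(1/\e_k)$ along the chosen subsequence) cannot rule out that different subsequences $\{\e_k\}$ produce different values. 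The paper closes this gap with a second, independent application of the almost-periodicity machinery (Proposition \ref{existence-formula}): one truncates $u_T$ in $y$ using Lemma \ref{Lemma:EstimateOnSlices}, then tiles $(0,S)^d$ by copies of $(0,T)^d$ placed at almost-periods $\tau_\ell\in\mathcal T_\eta$ (not at exact lattice translates $T\ell$), pays an $\eta(1+|A|^p)$-error per tile from \eqref{eq:tauz} plus the slab and filler errors, and deduces $\limsup_S g_A(S)\le\liminf_T g_A(T)$. You identified the right tool but only deployed it once; it is needed twice, and the place you replaced it with a periodic-extension shortcut is precisely where the incommensurability bites.
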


\begin{rmk}[homogenization on $\Pi$]\rm
By using \eqref{eq:equiv} we can interpret the result as a homogenization theorem directly on $\Pi$, with the related homogenized energy
$$
\widetilde F_{\rm hom}(u)=\int_{\widetilde \omega} f_{\rm hom}(\nabla u \, \widetilde R^{-1})\,\mathrm d{\mathcal H}^d(x),
$$
where $\widetilde R$ is the matrix related to the restriction of the linear isometry to the subspace of $\mathbb R^{d+1}$ parameterized as $\mathbb R^d$, and the Sobolev space $W^{1,p}(\widetilde \omega,\mathbb R^m)$  (with underlying measure the $d$-dimensional Hausdorff measure  restricted to $\widetilde \omega$) is suitably defined.
\end{rmk}

 \begin{figure}[ht!]
\centerline{\includegraphics[width=.8\textwidth]{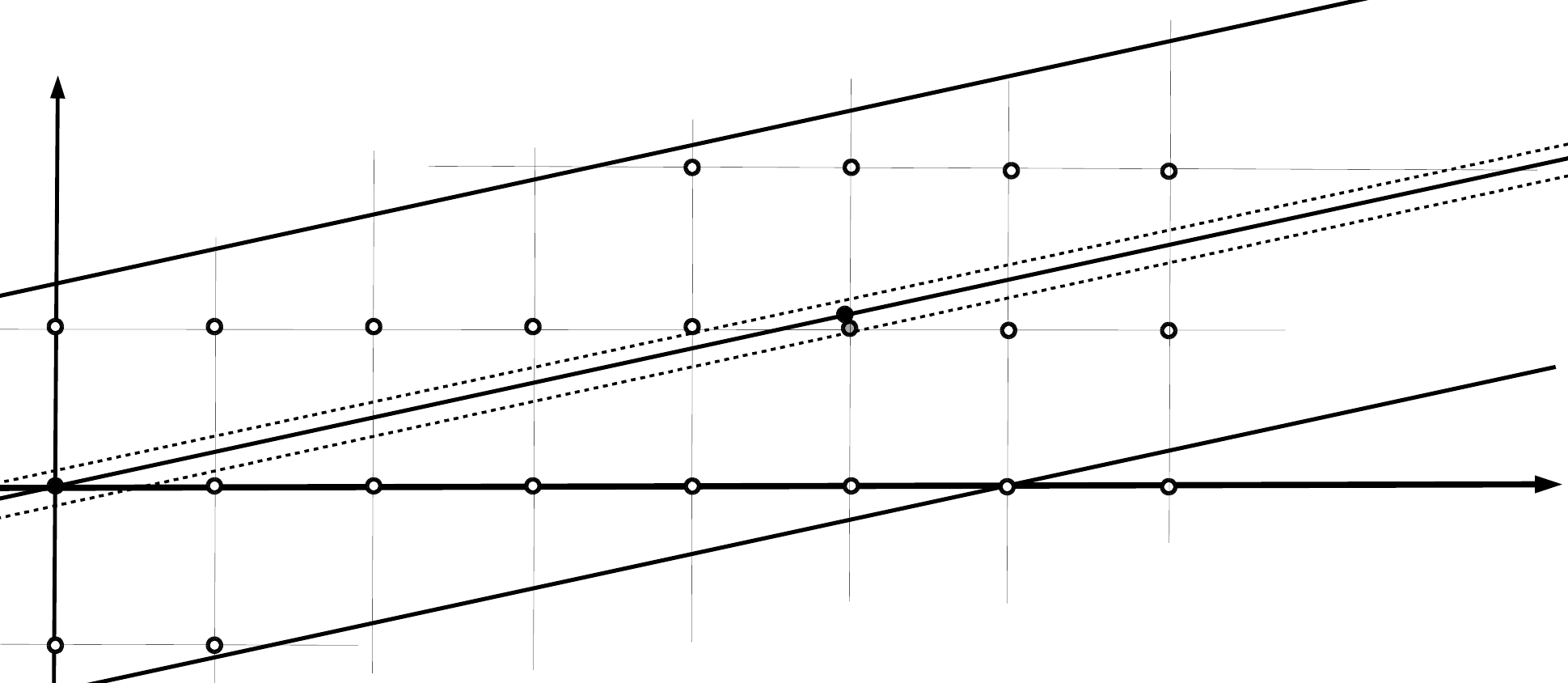}}
\caption{An irrational $\mathbb Z^2$-periodic thin film in $\mathbb R^2$ with an $\eta$-neighbourhood of its middle line.}
\label{2022ABC-Fig1}
\end{figure}
\begin{rmk}[connections with almost periodicity]\label{rem:almost-p}\rm The theorem above has been proved in \cite{bff} if $f$ is periodic in the first variable, which is the case if the lattice $\Pi \cap \mathbb{Z}^{d+1}$ has dimension $d$ (or, equivalently, if it spans  $\Pi$). 
In the case at hand  we will use a sort of {\em geometric quasi-periodicity}. Namely, we will use the fact that, for fixed $\eta>0$ the set $\mathcal T_\eta$ of $\tau\in \Pi$ such that dist$(\tau,\mathbb Z^{d+1})<\eta$ is uniformly dense in $\Pi$; that is, there exists an inclusion length $L_\eta>0$ such that $\mathcal T_\eta+[0,L_\eta]^{d+1}\supset \Pi$. This is an immediate consequence of the periodicity of the function $\psi(x)={\rm dist}(x,\mathbb Z^{d+1})$ on $\mathbb R^{d+1}$ and the consequent quasi-periodicity of its restriction to $\Pi$. This property in turn implies its uniform almost periodicity (see \cite{besic} Definition 1.7), which exactly states that for all $\eta>0$ there exist a uniformly dense set of $\eta$-{\em almost periods}; i.e., $\tau$ such that $|\psi(x+\tau)-\psi(x)|<\eta$ for all $x\in \Pi$. In particular, taking $x=0$, we have that $\psi(\tau)\le\eta$, which proves the claim. Note that this property is most relevant if $\Pi$ is irrational, and is trivial if $\Pi \cap \mathbb{Z}^{d+1}$ has dimension $d+1$. In Fig.~\ref{2022ABC-Fig1}, in a two-dimensional setting, we picture an element of ${\mathcal T}_\eta$ on $\Pi$ (black dot) and its closest element in $\mathbb Z^2$ (grey dot). 

This quasi-periodicity argument also shows that if $\widetilde f$ is continuous in the first variable uniformly with respect to the second one, then $f$ is almost periodic in the $x$-directions uniformly with respect to the second variable, and we can apply the results of \cite{braidesh} Chapter 24. This observation suggests that we can then generalize Theorem \ref{thm:MainResult} by supposing that for all $\eta>0$ there exists a uniformly dense set  $\mathcal T_\eta$ in $\Pi$ such that for all $\tau\in \mathcal T_\eta$ there exists $z_\tau\in 
\mathbb{R}^{d+1}$ such that $\|\tau-z_\tau\|<\eta$ and $|\widetilde f(x+z_\tau,A)-\widetilde f(x,A)|\le \eta(1+|A|^p)$ for all $x\in\mathbb R^{d+1}$ and $A$. This trivially holds if $\widetilde f$ is periodic as above taking $z_\tau\in\mathbb Z^{d+1}$.
\end{rmk}

The geometric quasi-periodicity property highlighted above must be complemented by a lemma, which will be used to cope with the fact that periods may not belong to $\Pi$, so that translation arguments within the thin film cannot be directly applied.

\begin{lemma}\label{Lemma:EstimateOnSlices}
Let $h>0$ and let $g:[0,h] \to [0,+\infty)$ be an integrable function and define 
\begin{equation*}
     C\coloneqq \int_0^hg(y)\,\mathrm dy  < +\infty.
\end{equation*}
Then,  for every $\delta>\eta>0$, the set 
\begin{equation} \label{eq:g(y)Inequality} 
E_\eta \coloneqq \bigg\{y\in [h-\delta,h]\,:\,  (h+\eta-y)g(y) \leq \frac{C}{\log \big(\delta/\eta\big)}\bigg\} \end{equation}
has positive measure.
\end{lemma}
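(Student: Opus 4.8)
The plan is to argue by contradiction. Suppose that $E_\eta$ is Lebesgue--null in $[h-\delta,h]$; I will derive the bound $\int_{h-\delta}^h g>C$, which is impossible since $g\ge 0$ gives $\int_{h-\delta}^h g\le\int_0^h g=C$. So the whole argument reduces to turning the negation of the defining inequality in \eqref{eq:g(y)Inequality} into a pointwise lower bound for $g$ and integrating it against the right weight.

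Concretely: if $E_\eta$ has measure zero, then for a.e.\ $y\in[h-\delta,h]$ the reverse strict inequality $(h+\eta-y)\,g(y)>C/\log(\delta/\eta)$ holds, and since $h+\eta-y\ge\eta>0$ on this interval this is equivalent to
\[
g(y)\;>\;\frac{C}{\log(\delta/\eta)}\cdot\frac{1}{h+\eta-y}\qquad\text{for a.e. }y\in[h-\delta,h].
\]
The right-hand side is integrable on $[h-\delta,h]$, and its integral is computed explicitly by the substitution $s=h+\eta-y$:
\[
\int_{h-\delta}^{h}\frac{\mathrm dy}{h+\eta-y}=\int_{\eta}^{\eta+\delta}\frac{\mathrm ds}{s}=\log\frac{\eta+\delta}{\eta}\;>\;\log\frac{\delta}{\eta},
\]
the last inequality because $\eta+\delta>\delta$ and $\log$ is increasing. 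Since $g$ is \emph{strictly} above this function a.e.\ on a set of positive measure, integrating the pointwise bound gives a strict inequality, whence
\[
\int_{h-\delta}^{h}g(y)\,\mathrm dy\;>\;\frac{C}{\log(\delta/\eta)}\,\log\frac{\eta+\delta}{\eta}\;\ge\;\frac{C}{\log(\delta/\eta)}\,\log\frac{\delta}{\eta}\;=\;C,
\]
using $C\ge 0$ and $\log(\delta/\eta)>0$ (recall $\delta>\eta$). This contradicts $\int_{h-\delta}^h g\le C$, so $E_\eta$ must have positive measure.

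There is no real obstacle here; the only points needing a line of care are the exact evaluation of the logarithmic integral — which is precisely what dictates the constant $C/\log(\delta/\eta)$ appearing in the statement, the strict inequality $\log\frac{\eta+\delta}{\eta}>\log\frac{\delta}{\eta}$ being exactly the slack that closes the argument — and the upgrade from an a.e.\ strict pointwise bound to a strict integral bound, where the positivity of the length of $[h-\delta,h]$ (i.e.\ $\delta>0$) is used. The degenerate case $C=0$ is handled by the same argument, since then $g=0$ a.e.\ on $[0,h]$ and $E_\eta$ is, up to a null set, all of $[h-\delta,h]$; alternatively, the negated inequality reads $g>0$ a.e.\ on $[h-\delta,h]$, again forcing $\int_{h-\delta}^h g>0=C$.
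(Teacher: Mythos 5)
Your proof is correct and is essentially the paper's own argument: both argue by contradiction from the a.e.\ reverse strict inequality, integrate the weight $1/(h+\eta-y)$ over $[h-\delta,h]$ to get $\log\bigl((\delta+\eta)/\eta\bigr)>\log(\delta/\eta)$, and conclude $\int_{h-\delta}^h g > C$, contradicting $g\ge 0$. Your explicit remarks on the $C=0$ case and on upgrading the a.e.\ strict pointwise bound to a strict integral inequality are fine points the paper glosses over, but they do not change the route.
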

\begin{proof}
Suppose, by contradiction, that for some $\delta$ there exists a $\eta$ such that  $E_{\eta}$  has null measure, then
\begin{equation*}
    (h+\eta-y)g(y) > \frac{C}{\log \big(\delta/\eta\big)},\quad \text{ for almost all } y \in [h-\delta,h].
\end{equation*}
Thus the strict inequality would persist under integration, and
\begin{eqnarray*}
    C & \ge& \int_{[h-\delta,h]}g(y)\,\mathrm dy> \frac{C}{\log \big(\delta/\eta\big)}\int_{[h-\delta,h]}\frac{1}{h+\eta-y}\,\mathrm dy\\
      & =& \frac{C}{\log \big(\delta/\eta\big)}\big(\log(\delta+\eta)-\log(\eta)\big)  
       = C\frac{\log{\big(\delta/\eta +1 \big)}}{\log{\big(\delta /\eta \big)}}>C
\end{eqnarray*}
providing a contradiction.
\end{proof}

\begin{figure}[h!]
\centerline{\includegraphics[width=.7\textwidth]{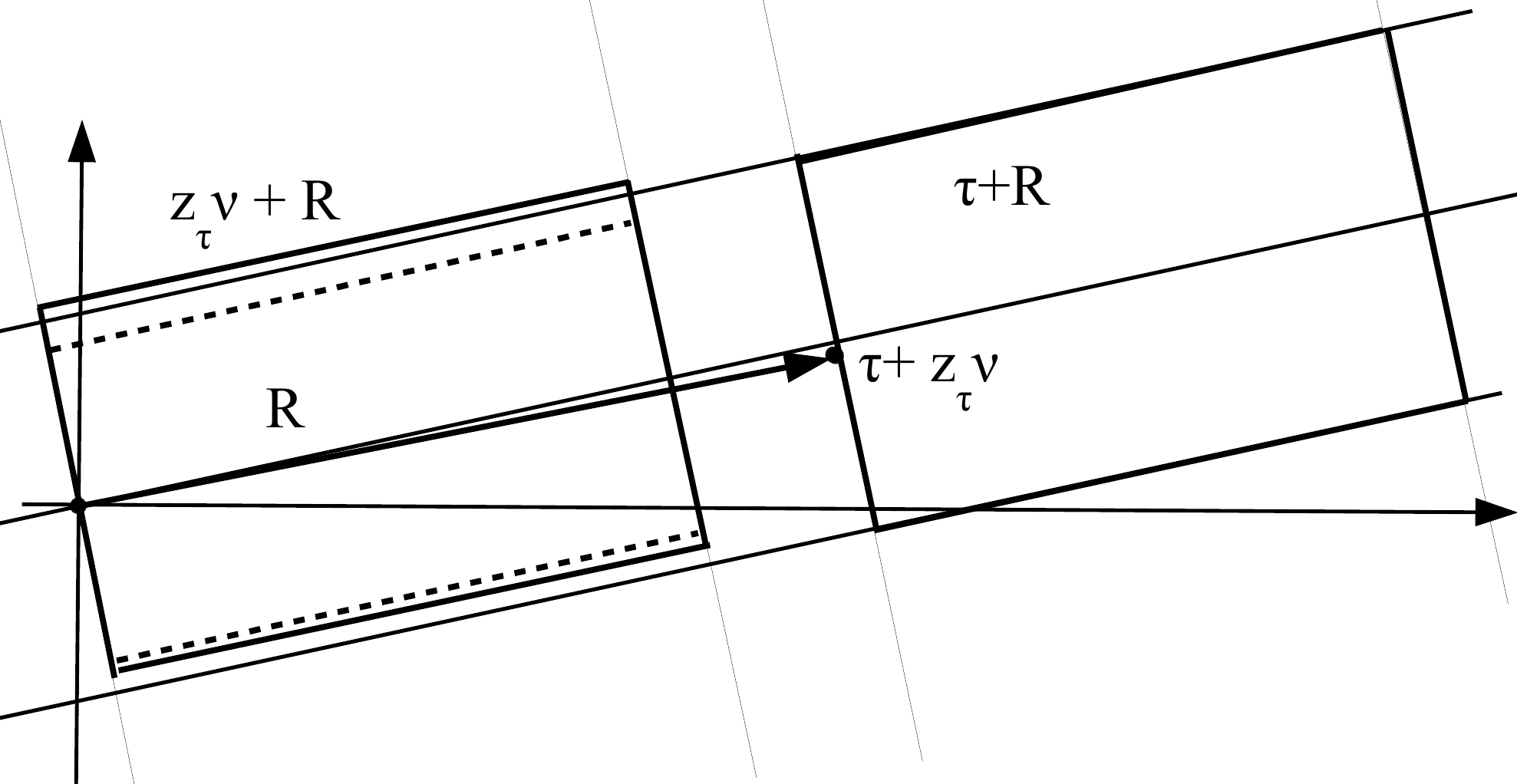}}
\caption{Translation of a rectangle $R$ by an $\eta$-almost period.}
\label{2022ABC-Fig3}
\end{figure}
The key geometric argument in the proof will be the use of Lemma \ref{Lemma:EstimateOnSlices} to define test functions on translated copied of sets by an almost period. In Fig.~\ref{2022ABC-Fig3} we have drawn a cartoon in two dimensions of the argument in the original irrational thin-film geometrical setting (that is, before changing variables with $\phi$), supposing $\tilde f$ periodic: we have a function $u_R$ defined on a rectangle $R$ inside the thin film and need to define a function in the translation of $R$ by some $\eta$-almost period $\tau\in\Pi$ with an energy that differs very little from that of the original function. We cannot directly use a translation by the period corresponding to $\tau$, which in this setting is of the form $\tau+z_\tau\nu\in\mathbb Z^2$, since to do this we would need that the function be defined on the (slightly) translated rectangle $z_\tau\nu+R$. We then restrict the original function $u_R$ slightly inside $R$ is such a way that we control the energy on the upper and lower boundary (dotted lines) and extend this restriction to a function $\tilde u_R$ defined on the whole strip orthogonal to the thin film, with controlled energy. This can be done by a construction that uses Lemma \ref{Lemma:EstimateOnSlices}. It is then possible to simply use a translation of this extended function by $\tau+z_\tau\nu$, and restrict it to $\tau+R$.

\section{Proof of the homogenization theorem}
We directly prove the theorem under the more general assumptions in Remark \ref{rem:almost-p}, which can be stated as follows for the function $f$: for all $\eta>0$ there exists $L_\eta>0$ and $\mathcal T_\eta$ in $\mathbb R^d$ such that 
$$
\mathcal T_\eta+[0,L_\eta]^d=\mathbb R^d,
$$
and for all $\tau\in \mathcal T_\eta$ there exists $z_\tau\in \mathbb{R}$ such that $|z_\tau|<\eta$ and 
\begin{equation}\label{eq:tauz}
| f(x+(\tau,z_\tau),A)-f(x,A)|\le \eta(1+|A|^p)
\end{equation} for all $x\in\mathbb R^{d+1}$ and $A$.

We first note that, by the compactness theorem for thin structures (\cite{bff} and  \cite{braidesh} Theorem 24.20) for every sequence $\{\eps_k\}_k$ of positive real numbers, $\eps_k \to 0$, there exist a subsequence (still denoted by $\eps_k$) and a Carathéodory function $f_0:\mathbb R^d \times \mathbb{M}^{m\times d} \to [0,+\infty)$ satisfying
\begin{equation*}
    0 \leq f_0(x,A) \leq \beta(1 + |A|^p),
\end{equation*}
 for all $x \in \omega$ and $A \in \mathbb{M}^{m\times d}$ such that 
\begin{equation}\label{eq:compek}
    \Gamma\hbox{-}\mkern-10mu\lim_{k \to +\infty} I_{\e_k}(u) = \int_{\omega} f_0(x,\nabla u)\,\mathrm{d}x
\end{equation}
for all $u \in W^{1,p}(\omega,\mathbb{R}^m)$ with respect to the convergence in Definition \ref{def:convfun}.
We then have to prove that $f_0$ does not depend on the space variable and is given by formula \eqref{eq:HomogenizationFormula}, which also implies that it does not depend on the subsequence $\e_k$.
To that end, we will use localization arguments and results ensuring the possibility of fixing boundary values taken from \cite{bff} and Lemma \ref{Lemma:EstimateOnSlices}.

\smallskip
In the following result we prove that $f_0$ depends only on the gradient, $f_0(x,A) = f_0(A)$. 
 
\begin{pro} 
\label{proposition:f0hom}
For any sequence $\{\eps_k\}_k$ such that the $\Gamma$-limit in \eqref{eq:compek} exists, $f_0$ depends only on the gradient; that is, $f_0(x,A) = f_0(A)$.
\end{pro}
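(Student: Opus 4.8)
The plan is to show that, for each fixed $A$, the map $x\mapsto f_0(x,A)$ is constant for a.e.\ $x$; since $f_0$ is Carathéodory, applying this for $A$ in a countable dense set and using the continuity of $f_0(x,\cdot)$ then yields $f_0(x,A)=f_0(A)$ for all $A$. First I would localize. Write $\ell_A$ for the linear map $x\mapsto Ax$, $F_\e(\cdot;U)$ for the functional \eqref{eq:funcdef3} with $\omega$ replaced by an open set $U$, and
\[
m_\e(A;U):=\inf\big\{F_\e(w;U):\ w\in W^{1,p}(U\times(-h,h);\mathbb R^m),\ w=\ell_A\ \text{on}\ (\partial U)\times(-h,h)\big\}.
\]
The compactness and boundary-data results of \cite{bff} (together with \cite{braidesh}, Ch.~24) give $m_{\e_k}(A;U)\to m_0(A;U):=\inf\{\int_U f_0(x,A+\nabla v)\,\mathrm dx:\ v\in W_0^{1,p}(U;\mathbb R^m)\}$ for every bounded open $U\subset\subset\omega$, and the standard blow-up property of quasiconvex integrands (recall that $f_0(x,\cdot)$, being a $\Gamma$-limit density, is quasiconvex) gives $\rho^{-d}m_0(A;Q_\rho(x_0))\to f_0(x_0,A)$ for a.e.\ $x_0$, where $Q_\rho(x_0)$ denotes the open cube of side $\rho$ centred at $x_0$. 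Hence it suffices to prove $f_0(x_1,A)\le f_0(x_0,A)$ for two such points $x_0,x_1\in\omega$, the reverse inequality following by symmetry.

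To compare the two points I would transport an almost-optimal competitor from $x_0$ to $x_1$. Fix $\rho>0$ small with $\overline{Q_\rho(x_0)}\cup\overline{Q_\rho(x_1)}\subset\omega$ and parameters $0<\eta<\delta<h$; below $C$ denotes a constant depending only on $d,\alpha,\beta,h$, which may change from line to line. Let $w_k$ be almost optimal for $m_{\e_k}(A;Q_\rho(x_0))$ and equal to $\ell_A$ near the lateral boundary, and use the uniform density of $\mathcal T_\eta$ (inclusion length $L_\eta$) to pick $\tau_k\in\mathcal T_\eta$ with $|\e_k\tau_k-(x_1-x_0)|\le\sqrt d\,\e_k L_\eta$, so that $\e_k\tau_k\to x_1-x_0$; write $z_k:=z_{\tau_k}$, $|z_k|<\eta$. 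Applying Lemma \ref{Lemma:EstimateOnSlices} (and its reflection at $-h$) to $g(y)=\tfrac1{2h}\int_{Q_\rho(x_0)}f(x/\e_k,y,(\nabla_xw_k\,|\,\tfrac1{\e_k}\partial_yw_k))\,\mathrm dx$, whose integral over $(-h,h)$ is $F_{\e_k}(w_k;Q_\rho(x_0))\le C(1+|A|^p)\rho^d$, I select slices $y_k^\pm$ at distance $<\delta$ from $\pm h$ on which the energy density is so small that, cutting $w_k$ there and extending it constantly in $y$, the resulting $W_k\in W^{1,p}(Q_\rho(x_0)\times(-h-\eta,h+\eta);\mathbb R^m)$ satisfies (using also $|\nabla_xw_k|^p\le f/\alpha$ to control the extension)
\[
\tfrac1{2h}\int_{Q_\rho(x_0)\times(-h-\eta,h+\eta)}\!\!\!\!f\big(x/\e_k,y,(\nabla_xW_k\,|\,\tfrac1{\e_k}\partial_yW_k)\big)\,\mathrm dx\,\mathrm dy\;\le\;F_{\e_k}(w_k;Q_\rho(x_0))+C(1+|A|^p)\Big(\delta+\tfrac1{\log(\delta/\eta)}\Big)\rho^d.
\]
Finally I set $\tilde w_k(x,y):=W_k(x-\e_k\tau_k,y-z_k)+A\e_k\tau_k$ on $Q_\rho(x_0+\e_k\tau_k)\times(-h,h)$ — legitimate since $y-z_k\in(-h-\eta,h+\eta)$ — and complete it by $\ell_A$ on the sliver $(Q_\rho(x_1)\setminus Q_\rho(x_0+\e_k\tau_k))\times(-h,h)$; then $\tilde w_k=\ell_A$ on $(\partial Q_\rho(x_1))\times(-h,h)$, so $\tilde w_k$ is admissible for $m_{\e_k}(A;Q_\rho(x_1))$.

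In $F_{\e_k}(\tilde w_k;Q_\rho(x_1))$ the change of variables $x'=x-\e_k\tau_k$, $y'=y-z_k$ turns the first argument of $f$ into $(x'/\e_k,y')+(\tau_k,z_k)$, so the quasi-periodicity estimate \eqref{eq:tauz} applies; absorbing the resulting $\eta(1+|\cdot|^p)$ term by coercivity and using the sliver bound $|Q_\rho(x_1)\setminus Q_\rho(x_0+\e_k\tau_k)|=O(\e_k\rho^{d-1}L_\eta)$ one gets
\[
m_{\e_k}(A;Q_\rho(x_1))\le F_{\e_k}(\tilde w_k;Q_\rho(x_1))\le(1+C\eta)\Big[F_{\e_k}(w_k;Q_\rho(x_0))+C(1+|A|^p)\Big(\delta+\tfrac1{\log(\delta/\eta)}\Big)\rho^d\Big]+C(1+|A|^p)\eta\rho^d+o_k(1).
\]
Letting $k\to\infty$ gives $m_0(A;Q_\rho(x_1))\le(1+C\eta)m_0(A;Q_\rho(x_0))+C(1+|A|^p)(\eta+\delta+1/\log(\delta/\eta))\rho^d$; dividing by $\rho^d$ and letting $\rho\to0$ gives $f_0(x_1,A)\le(1+C\eta)f_0(x_0,A)+C(1+|A|^p)(\eta+\delta+1/\log(\delta/\eta))$; finally sending $\eta\to0$ with $\delta$ fixed (so $\log(\delta/\eta)\to+\infty$) and then $\delta\to0$ yields $f_0(x_1,A)\le f_0(x_0,A)$. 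By symmetry equality holds, hence $f_0(\cdot,A)$ is a.e.\ constant and the proposition follows.

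The main obstacle — the reason Lemma \ref{Lemma:EstimateOnSlices} is needed at all — is that a quasi-period of $f$ has the form $(\tau,z_\tau)$ with $\tau$ uniformly dense in $\mathbb R^d$ but $z_\tau$ only small and \emph{not} small compared with the fixed half-thickness $h$, so rigidly translating a competitor by $\e_k(\tau,z_\tau)$ moves it out of the slab $Q_\rho\times(-h,h)$ in the transverse direction. The lemma supplies the good slices at which one may cut and enlarge the slab at a cost only logarithmically large in $\delta/\eta$, making the translation feasible; the remaining difficulty is purely bookkeeping — choosing the order of the limits ($k\to\infty$, then $\eta\to0$, then $\delta\to0$) so that the three error terms, each controlled uniformly in the scale $\e_k$ and in $\rho$, vanish.
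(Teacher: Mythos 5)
Your argument is correct and follows essentially the same route as the paper's: cut an almost-optimal competitor at good transverse slices supplied by Lemma~\ref{Lemma:EstimateOnSlices}, extend it constantly in $y$ to the slightly enlarged slab, translate by the scaled almost-period $(\e_k\tau_k,z_k)$ using \eqref{eq:tauz}, and pass to the limits in the order $k\to\infty$, then $\eta\to0$, then $\delta\to0$. The only (immaterial) difference is in the localization scaffolding: you use minimum values $m_{\e_k}(A;\cdot)$ with affine boundary data and the blow-up identity at Lebesgue points, whereas the paper compares the localized limits $F_0(Ax,B^d_\rho(\cdot))$ directly via recovery sequences and the $\liminf$ inequality on a slightly enlarged ball.
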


\begin{proof}[Proof]
For any open subset $U\subset \omega$ we consider the \emph{localized} functionals
\begin{equation*}
    \begin{split}
        F_{\eps_k}(u,U)&\coloneqq  \frac{1}{2h}\int_{U\times(-h,h)} f\big({\e_k^{-1}}x,y, \big(\nabla_x u\big| {\e_k^{-1}}{\partial_y u}\big)\big)\mathrm{d}x\,\mathrm{d}y \\
        F_0(u,U) &\coloneqq {1\over2h}\int_{U\times(-h,h)} f_0(x,\nabla_xu)\,\mathrm{d}x\,\mathrm{d}y.
    \end{split}
\end{equation*}
Let $B^d_\rho(x)$ denote the ball of radius $\rho$ and center $x$ in $\mathbb R^d$.
It suffices to prove that given any $x', x'' \in \omega$ and $\rho > 0$ such that
$B^d_\rho(x')\Subset \omega$ and $B^d_\rho(x'') \Subset \omega$, and any $A \in \mathbb{R}^{m \times d}$, we have
\begin{equation}
    F_0\big(Ax, B^d_\rho(x')\big) =  F_0\big(Ax, B^d_\rho(x'')\big).
\end{equation}

Up to fixing lateral boundary values, we find a sequence $\{u_k\}_k \in \mathrm W^{1,p}\big(B^d_\rho(x') \times (-h,h),\mathbb{R}^m\big)$ with $u_k=0 \text{ on } \big(\partial B^d_\rho(x')\big)\times (-h,h)$  and such that $u_k  \to 0$ in $\mathrm L^p(\Omega_1,\mathbb{R}^m)$ and 
\begin{equation}
\lim_{k\to+\infty} F_{\eps_k}\big(Ax + u_k, B^d_\rho(x')\big) = F_0\big(Ax, B^d_\rho(x')\big)
\end{equation}
 (see \cite{bff}).

Let $\delta>0$ be fixed, and let $\eta\in(0,\delta)$. From now on we use the shorthand $\partial_y$ for the partial derivative with respect to $y$. Applying Lemma \ref{Lemma:EstimateOnSlices} to the integrable functions
\begin{equation*}
   g(t)=\int_{B^d_\rho(x')} \big|\big(A+\nabla_x u_k(x,t)\big|\eps_k^{-1}\,\partial_yu_k(x,t)\big)\big|^p\,\mathrm dx
\end{equation*}
for $t\in [0,h]$, we get the existence of $y_{\eta}^+ \in (h-\delta,h)$ such that 
\begin{eqnarray*}
     &&{\int_{y_{\eta}^+}^{h+\eta}}\int_{B^d_\rho(x')}\big|\big(A+\nabla_x u_k(x, y_{\eta}^+)\big|\,\eps_k^{-1}\partial_y u_k(x,y_{\eta}^+)\big)\big|^p\,\mathrm dx\, {\mathrm dy}\\
     \nonumber &\leq&  \frac{1}{|\log (\delta/\eta)|} \int_{B^d_\rho(x') \times (0,h)}  \big|\big(A+\nabla_x u_k(x,y_{\eta}^+)\big|\,\eps_k^{-1}\partial_y u_k(x,y_{\eta}^+)\big)\big|^p\,\mathrm dx\,\mathrm dy\\
     &\leq&  \frac{C_k}{\alpha|\log (\delta/\eta)|}, 
\end{eqnarray*}
where  $C_k\coloneqq F_{\eps_k}(Ax+u_k, B^d_\rho(x'))$.
Hence, we obtain
\begin{eqnarray}\label{eq:SliceEstimateU}
     &&\nonumber \hskip-1cm{\int_{y_{\eta}^+}^{h+\eta}}\int_{B^d_\rho(x')}f\Big (\eps_k^{-1}x,y,\big(A+\nabla_x u_k(x,y_{\eta}^+)\big|\,\eps_k^{-1}\partial_y u_k(x,y_{\eta}^+)\big)\Big) \,\mathrm dx \, {\mathrm dy} \\
     &\leq& \beta\left(|B^d_\rho(x')|{(\delta + \eta)} + \frac{C_k}{\alpha|\log (\delta/\eta)|}\right)\,,\\
      &=& \beta\left(c\rho^d{(\delta + \eta)} + \frac{C_k}{\alpha|\log (\delta/\eta)|}\right)\,,
\end{eqnarray}
being $c\coloneqq |B^1|$. Similarly, we obtain the existence of $y_{\eta}^- \in (-h,-h+\delta)$ such that
\begin{eqnarray}\label{eq:SliceEstimateU2}\nonumber
   && \hskip-1cm{\int_{-h-\eta}^{y_{\eta}^-}}\int_{B^d_\rho(x')}f\Big (\eps_k^{-1}x,y,\big(A+\nabla_x u_k(x,y_{\eta}^-)\big|\,\eps_k^{-1}\partial_y u_k(x,y_{\eta}^-)\big)\Big) \,\mathrm dx\,{\mathrm dy}\\  &\leq& \beta\left(c \rho^d {(\delta + \eta)} + \frac{C_k}{\alpha|\log (\delta/\eta)|}\right).
\end{eqnarray}
 Define $\widetilde{u}_k: B^d_\rho(x')\times \mathbb R\to\mathbb R^m$ as
\[\widetilde{u}_k(x,y)\coloneqq \begin{cases} u_k(x,y_\eta^+) &\text{ if } y\ge y^+_\eta\\
u_k(x,y) &\text{ if } y_{\eta}^-\le y \le y_\eta^+ \\
u_k(x,y_\eta^-) &\text{ if } y\le y^-_\eta.
\end{cases}\]
Note that $\widetilde{u}_k(x,y)=0$ if $x\in \partial B^d_\rho(x')$.

Let  $\{\tau_k\}_k$  be a sequence in $\e_k\mathcal{T}_{\eta}$ (the set of scaled almost-periods) such that 
$\tau_k\to x''-x'$. Such a sequence exists by the uniform density of $ \mathcal{T}_{\eta}$. Let $z_k=z_{\tau_k}$ be such that \eqref{eq:tauz} holds,
and define $v_k(x,y) = \widetilde{u}_k(x-\tau_k,y- z_k)$. Noting that $v_k$ can be extended by $0$ outside $\big(\tau_k +  B^d_\rho(x') \big)\times (-h,h)$ we get from almost-periodicity, (\ref{eq:SliceEstimateU}) and (\ref{eq:SliceEstimateU2}):
\begin{eqnarray*}
        &&F_{\eps_k}\big(Ax + v_k,\,\tau_k +  B^d_\rho(x') \big)\\
       & = &\frac1{2h}\int_{\big(\tau_k +  B^d_\rho(x')\big) \times (-h,h)} f\bigg(\frac{x}{\eps_k},y, \Big(A + \nabla_x v_k\Big| \frac{1}{\eps_k}\partial_y v_k\Big)\bigg)\mathrm{d}x\,\mathrm{d}y\\
        &= &\frac1{2h} \int_{ B^d_\rho(x') \times(-h-z_k,h+z_k)} f\bigg(\frac{x}{\eps_k}+\frac{\tau_k}{\eps_k}, y +  z_k, \Big(A +\nabla_x \widetilde{u}_k\Big| \frac{1}{\eps_k}\partial_y \widetilde{u}_k\Big)\bigg)\mathrm{d}x\,\mathrm{d}y\\
        &\le & \frac1{2h}\int_{ B^d_\rho(x') \times(-h-\eta,h+\eta)} f\bigg(\frac{x}{\eps_k}+\frac{\tau_k}{\eps_k}, y +  z_k, \Big(A +\nabla_x \widetilde{u}_k\Big| \frac{1}{\eps_k}\partial_y \widetilde{u}_k\Big)\bigg)\mathrm{d}x\,\mathrm{d}y\\
       & \leq &\frac{\beta c\rho^d(\eta+\delta)}h\\
        && \mkern+50mu + \frac1{2h}\Big(1+\frac{2\beta}{\alpha|\log (\delta/\eta)|} \Big)\int_{ B^d_\rho(x') \times(-h,h)} f\bigg(\frac{x}{\eps_k}+\frac{\tau_k}{\eps_k}, y +  z_k, \Big(A +\nabla_x {u}_k\Big| \frac{1}{\eps_k}\partial_y {u}_k\Big)\bigg)\mathrm{d}x\,\mathrm{d}y\\
       & \leq & \frac{\beta c\rho^d}h +\frac1{2h} \Big(1+\frac{2\beta}{\alpha|\log (\delta/\eta)|} \Big)\int_{B^d_\rho(x') \times (-h,h)} f\bigg(\frac{x}{\eps_k},y , \Big(A +\nabla_x u_k\Big| \frac{1}{\eps_k}\partial_y u_k\Big)\bigg)\mathrm{d}x\,\mathrm{d}y\\
        && \mkern+50mu +  \frac\eta{2h}\Big(1+\frac{2\beta}{\alpha|\log (\delta/\eta)|} \Big) \int_{B^d_\rho(x') \times (-h,h)} \Bigl( 1 + \Big\abs \Big(A +\nabla_x u_k\Big| \frac{1}{\eps_k}\partial_y u_k\Big)\Big\abs ^p\Bigr)\mathrm{d}x)\,\mathrm{d}y \\\\
        &\le & \frac{\beta c\rho^d(\eta+\delta)}h + \Big(1+\frac{2\beta}{\alpha|\log (\delta/\eta)|} \Big) F_{\eps_k}\big(Ax + u_k,\,  B^d_\rho(x') \big)\\ &&\mkern50mu +\frac\eta{2h}\Big(1+\frac{2\beta}{\alpha|\log (\delta/\eta)|} \Big)\Big( 2{h}c\rho^d+\frac{1}{\alpha}F_{\eps_k}\big(Ax + u_k,\,   B^d_\rho(x') \big)\Big).
\end{eqnarray*}
Fix now $r>1$, and note that for $k$ large enough we have $\tau_k + B^d_\rho(x') \subset B^d_{r \rho}(x'')  \Subset \omega$. 
Since $v_k \to 0$ in $L^p\big(B^d_{r \rho}(x') \times (-h,h), \mathbb{R}^m\big)$:

\begin{eqnarray*}
        F_0\big(Ax,  B^d_\rho(x'') \big)
        &\leq &\, F_0\big(Ax,B^d_{r \rho}(x'')\big)\\
        &\leq &\, \liminf_{k\to+\infty} F_{\eps_k}\big(Ax + v_k,B^d_{r \rho}(x'') \big)\\
        &\leq &\, \Big(1 + \frac{\eta}{\alpha}\Big)\Big(1+\frac{2\beta}{\alpha|\log (\delta/\eta)|} \Big) \liminf_{k\to+\infty} F_{\eps_k}\big(Ax + u_k, B^d_{\rho}(x') \big)\\
        && \mkern40mu+ \frac{c\rho^d\eta}h\Big(1+\frac{2\beta}{\alpha|\log (\delta/\eta)|} \Big)+\frac{\beta c\rho^d(\eta+\delta)}h\\
        && \mkern40mu + \frac\beta h\big(1 + \abs A \abs ^p\big) \big\abs B^d_{r \rho}(x'') \setminus B^d_{ \rho}(x'')\big\abs  \\
        &\leq &\, \Big(1 + \frac{\eta}{\alpha}\Big)\Big(1+\frac{2\beta}{\alpha|\log (\delta/\eta)|} \Big) F_0\big( u_k, B^d_{\rho}(x') \big)\\
        && \mkern40mu+ \frac{c\rho^d\eta}h\Big(1+\frac{2\beta}{\alpha|\log (\delta/\eta)|} \Big)+\frac{\beta c\rho^d}h(\eta+\delta)\\
        && \mkern40mu + \frac\beta h\big(1 + \abs A \abs ^p\big) \big\abs B^d_{r \rho}(x'') \setminus B^d_{ \rho}(x'')\big\abs . 
\end{eqnarray*}
Letting first $r \to 1$, then $\eta \to 0$ and eventually $\delta\to 0$, we finally obtain the inequality
\begin{equation*}
    F_0\big(Ax, B^d_{ \rho}(x'') \big) \leq F_0\big(Ax, B^d_{ \rho}(x') \big).
\end{equation*}
By symmetry we then obtain also equality and the claim.
\end{proof}

We can then proceed in the proof of Theorem \ref{thm:MainResult}. 
Take $\omega = (0,1)^d$ and let $\{\eps_{k}\}$ be a subsequence given by \eqref{eq:compek}. The $\Gamma$-limit of the family $F_{\eps_k}$ exists and, for Proposition \ref{proposition:f0hom}, is equal to
\begin{equation*}
    F_0(u) = \int_{(0,1)^d} f_0(\nabla u)\,\mathrm dx.
\end{equation*}
This functional is sequentially weakly lower semicontinuous on $\mathrm W^{1,p}(\Omega_1,\mathbb{R}^m)$ so the function $f_0$ is $\mathrm W^{1,p}$-quasiconvex. This means we can write
\begin{equation}\label{eq: f0quasiconvex}
    f_0(A) = \min \biggl\{ \int_{(0,1)^d} f_0(A+\nabla u)\,\mathrm dx: u = 0 \hbox{ on }\partial(0,1)^d\biggr\} 
\end{equation}
for all $A \in \mathbb{M}^{m\times d}$.
From the property of convergence of minima for $\Gamma$-convergence and a change of variable in the right-hand side we obtain
\begin{eqnarray}
     f_0(A) & =&\nonumber \min \bigg\{ \int_{(0,1)^d} f_0(A+\nabla u)\,\mathrm dx:u = 0 \hbox{ on }\partial(0,1)^d \bigg\}\\
     & =&\nonumber  \min\bigg\{ \frac{1}{2h}\int_{(0,1)^d \times (-h,h)} f_0(A+\nabla_xu)\,\mathrm dx:u = 0 \hbox{ on } \partial\big((0,1)^d\big) \times (-h,h) \bigg\}\\
     & = &\nonumber \lim_{k\to+\infty} \inf\biggl\{ \frac{1}{2h} \int_{(0,1)^d\times(-h,h)}f\left(\frac{x}{\eps_k},y,\Big(A+\nabla_x u\Big|\frac{1}{\eps_k} \partial_y u\Big)\right) \,\mathrm dx\,\mathrm dy : {u \in \mathcal{W}_1}\biggr\}\\
     & = &\lim_{k\to+\infty} \inf\biggl\{ \frac{1}{2hT_k^d}\int_{(0,T_k)^d\times(-h,h)}f\big(x,y,(A+\nabla_x u|\partial_y u)\big) \,\mathrm dx\,\mathrm dy : {u \in \mathcal{W}_{T_k}}\biggr\},
\end{eqnarray}
where $T_k=1/\e_k$.

The following proposition states the existence of the limit in \eqref{eq:HomogenizationFormula}, giving in particular the independence of $f_0$ from the subsequence.

\begin{pro}\label{existence-formula} For every $A \in \mathbb{M}^{m \times d}$ the following limit exists
\begin{equation}
    f_{\hom}(A) = \lim_{T\to+\infty} \inf_{u \in \mathcal{W}_T}\biggl\{ \frac{1}{2hT^d}\int_{(0,T)^d\times(-h,h)}f\big(x,y,(A+\nabla_x u| \partial_y u)\big) \,\mathrm dx\,\mathrm dy \biggr\}.
\end{equation}
\end{pro}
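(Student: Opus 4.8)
The plan is to show that the quantity
\[
g_T(A):=\inf_{u\in\mathcal W_T}\frac{1}{2hT^d}\int_{(0,T)^d\times(-h,h)}f\big(x,y,(A+\nabla_x u\,|\,\partial_y u)\big)\,\mathrm dx\,\mathrm dy
\]
satisfies $\limsup_{T\to+\infty}g_T(A)\le\liminf_{S\to+\infty}g_S(A)$; since the opposite inequality between $\liminf$ and $\limsup$ is trivial, this gives existence of the limit. Observe first that $0\le g_T(A)\le\beta(1+|A|^p)$ for every $T$, by \eqref{eq:GrowthCondition} with the competitor $u=0$, so all quantities below stay bounded. The proof is a quantitative almost-subadditivity estimate: a near-optimal competitor at scale $S$ is replicated on many disjoint cubes obtained by translating $(0,S)^d$ inside $(0,T)^d$ with $T\gg S$, so as to produce a competitor at scale $T$; the translations are taken from the set $\mathcal T_\eta$ of almost-periods of $f$, and the replication is made compatible with the thin slab $(-h,h)$ in the $y$-direction by means of Lemma~\ref{Lemma:EstimateOnSlices}, exactly as in the proof of Proposition~\ref{proposition:f0hom}.

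Fix $0<\eta<\delta$, let $L_\eta$ be the inclusion length of $\mathcal T_\eta$, fix $S\ge1$ and, given $\epsilon>0$, an $\epsilon$-optimal competitor $u_S\in\mathcal W_S$ for $g_S(A)$. I would first modify $u_S$ near the faces $y=\pm h$ exactly as in Proposition~\ref{proposition:f0hom}: applying Lemma~\ref{Lemma:EstimateOnSlices} to $g(t)=\int_{(0,S)^d}\big|(A+\nabla_x u_S(x,t)\,|\,\partial_y u_S(x,t))\big|^p\,\mathrm dx$ one obtains levels $y_\eta^\pm$ with $|\,y_\eta^\pm\mp h\,|<\delta$ such that the function $\widetilde u_S$ obtained by freezing $u_S$ at these levels for $y\ge y_\eta^+$ and $y\le y_\eta^-$ is defined on $(0,S)^d\times(-h-\eta,h+\eta)$, still vanishes for $x\in\partial(0,S)^d$, and carries energy at most $(1+\sigma)\,2hS^d\big(g_S(A)+\epsilon\big)+\sigma S^d$, where $\sigma=\sigma(\eta,\delta)$ can be made arbitrarily small by letting first $\eta\to0$ and then $\delta\to0$, uniformly in $S$ — the $S$-dependence of the error terms from the Lemma being only through the volume factor $S^d$, since $\int_0^h g\le\frac1\alpha\,2hS^d\big(g_S(A)+\epsilon\big)$.

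Next I would tile. Set $S'=S+L_\eta+1$ and, for each index $k$ with $S'k+(0,S')^d\subset(0,T)^d$, pick $\tau_k\in\mathcal T_\eta$ with $\tau_k-S'k\in[-L_\eta,0]^d$ (possible since $\mathcal T_\eta+[0,L_\eta]^d=\mathbb R^d$) together with the associated shift $z_k=z_{\tau_k}$, $|z_k|<\eta$, satisfying \eqref{eq:tauz}. The cubes $Q_k:=\tau_k+(0,S)^d$ are then pairwise disjoint and contained in $(0,T)^d$, and there are at least $(\lfloor T/S'\rfloor-1)^d$ of them. Define $v$ on $(0,T)^d\times(-h,h)$ by $v(x,y):=\widetilde u_S(x-\tau_k,\,y-z_k)$ on each $Q_k\times(-h,h)$ and $v:=0$ elsewhere; since $\widetilde u_S$ vanishes for $x\in\partial(0,S)^d$, the function $v$ lies in $W^{1,p}$ and belongs to $\mathcal W_T$. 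On each $Q_k\times(-h,h)$ I would change variables $(x,y)\mapsto(x-\tau_k,y-z_k)$ — noting $(-h-z_k,h-z_k)\subset(-h-\eta,h+\eta)$, where $\widetilde u_S$ is defined — and use \eqref{eq:tauz} together with the lower bound $\alpha|\cdot|^p\le f$ to absorb the $\eta(1+|\cdot|^p)$ error; this shows that the energy of $v$ on $Q_k\times(-h,h)$ is again at most $(1+\sigma)\,2hS^d\big(g_S(A)+\epsilon\big)+\sigma S^d$ for a (possibly larger, still vanishing) $\sigma(\eta,\delta)$, while on the complement of $\bigcup_kQ_k$ one has $v\equiv0$ and the integrand is bounded by $\beta(1+|A|^p)$.

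Summing the energies, dividing by $2hT^d$ and letting $T\to+\infty$ — so that $(\text{number of tiles})\cdot S^d/T^d\to(S/S')^d$ and the relative volume of the complement tends to $1-(S/S')^d$ — I would obtain
\[
\limsup_{T\to+\infty}g_T(A)\ \le\ \Big(\tfrac{S}{S'}\Big)^d(1+\sigma)\big(g_S(A)+\epsilon\big)+\frac{\sigma}{2h}+\Big(1-\big(\tfrac{S}{S'}\big)^d\Big)\beta(1+|A|^p).
\]
I would then let $\epsilon\to0$, then $S\to+\infty$ along a subsequence realizing $\liminf_S g_S(A)$ — legitimate because $L_\eta$, hence $S'-S$, is held fixed, so $(S/S')^d\to1$ — and finally $\eta\to0$ and $\delta\to0$, which sends $\sigma\to0$ and yields $\limsup_T g_T(A)\le\liminf_T g_T(A)$, proving the claim; the independence of $f_{\hom}$ from the subsequence $\{\eps_k\}$ is then immediate by comparison with the computation preceding the statement. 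The delicate point, and the reason the order of limits is forced, is that $L_\eta\to+\infty$ as $\eta\to0$: one must not send $\eta\to0$ before $S\to+\infty$, for otherwise the tiles cover a vanishing fraction of $(0,T)^d$. The crux is therefore to keep the extension error $\sigma(\eta,\delta)$ from Lemma~\ref{Lemma:EstimateOnSlices} uniform in $S$ while $\eta$ (hence $L_\eta$) is frozen and $S\to+\infty$, and only afterwards to let $\eta,\delta\to0$.
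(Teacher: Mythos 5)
Your argument is correct and is essentially the paper's own proof with the roles of the two scales swapped in notation: an almost-optimal competitor at the smaller scale is flattened near $y=\pm h$ via Lemma~\ref{Lemma:EstimateOnSlices}, replicated over a tiling by translates taken from $\mathcal T_\eta$ (with the vertical shifts $z_\tau$ absorbed thanks to the flattening and \eqref{eq:tauz}), and the limits are taken in the same order (large scale, small scale, then $\eta$, then $\delta$), yielding $\limsup\le\liminf$. The only blemish is the choice $\tau_k-S'k\in[-L_\eta,0]^d$, which lets tiles with an index component equal to $0$ protrude outside $(0,T)^d$; choosing $\tau_k\in S'k+[0,L_\eta]^d$ (as the paper does) fixes this without any other change.
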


\begin{proof}
For every $T>0$ let
\begin{equation*}
            g_A(T) \coloneqq \inf_{u \in \mathcal{W}_T} \biggl\{ \frac{1}{2hT^d}\int_{(0,T)^d\times(-h,h)}f\big(x,y,(A+\nabla_x u| \partial_y u)\big) \,\mathrm dx\,\mathrm dy\biggr\}.
\end{equation*}
Fix $\delta >0$ and let $\eta$ be such that $\delta>\eta>0$. Now fix $T>0$ and let $u_T \in \mathcal{W}_T$ be such that 
\begin{equation*}
    \frac{1}{2hT^d}\underbrace{\int_{(0,T)^d\times(-h,h)}f\big(x,y,(A+\nabla_x u_T| \partial_y u_T)\big) \,\mathrm dx\,\mathrm dy }_{\eqqcolon C_T}\leq g_A(T) + \frac{1}{T}. 
\end{equation*}
We want to estimate $g_A(S)$ in terms of $g_A(T)$ when $S\gg T$. For this purpose, we construct test functions $u_S \in \mathcal{W}_S$ by a patchwork procedure, and we then exploit the almost-periodicity of the energy density $f$, extending $u_T$ to a function which has an energy contribution of $\mathrm o(\frac{1}{\eta})C_T$. 
More precisely, we apply Lemma \ref{Lemma:EstimateOnSlices} to the integrable functions defined for $t\in [0,h]$ by
\begin{equation*}
    g(t)\coloneqq \int_{(0,T)^d} |A+\nabla_x u_T(x,\pm  {t}),\partial_y  u_T(x,\pm {t})|^p\,\mathrm dx.
\end{equation*}
We then obtain two values $y_{\eta}^+ \in (h-\delta,h)$ and $y_{\eta}^- \in (h,-h+\delta)$ such that
\begin{eqnarray}\label{eq:SliceEstimate1} \nonumber
     && \int_{[0,T]^d \times {(y_{\eta}^+,h+\eta)}}f\big(x,y,\big( A+\nabla_x u_T(x,y_{\eta}^+)\big|\,\partial_yu_T(x,y_{\eta}^+) \big)\big)\,\mathrm dx\,\mathrm dy \\ \nonumber
     & \leq& \beta\int_{[0,T]^d \times {(y_{\eta}^+,h+\eta)}}\Bigl( 1 + \big|\big(A+\nabla_x u_T(x,y_{\eta}^+)\big|\,\partial_yu_T(x,y_{\eta}^+)\big)\big|^p\Bigr)\,\mathrm dx\,\mathrm dy \\ \nonumber
     & \leq& \beta T^d {|h+\eta-y_\eta^+|}+ \frac{\beta}{|\log (\delta/\eta)|} \int_{[0,T]^d \times (-h,h)}  \big|\big(A+\nabla_x u_T(x,y_{\eta}^+)\big|\,\partial_yu_T(x,y_{\eta}^+)\big)\big|^p\,\mathrm dx\,\mathrm dy \\
     &\leq& \beta\left(T^d(\eta+\delta) + \frac{C_T}{\alpha|\log (\delta/\eta)|}\right) 
\end{eqnarray}
and
\begin{eqnarray}\label{eq:SliceEstimate2}\nonumber
   &&\hskip-2cm\int_{[0,T]^d \times {(-h-\eta,y_{\eta}^-)}}f\Big(x,y,\big( A+\nabla_x u_T(x,y_{\eta}^+)\big|\,\partial_yu_T(x,y_{\eta}^+) \big)\Big)\,\mathrm dx\,\mathrm dy \\
   &&\leq \beta\left(T^d(\eta+\delta) + \frac{C_T}{\alpha|\log (\delta/\eta)|}\right).
\end{eqnarray}
Thus, we can modify $u_T$ setting
\[\widetilde{u}_T(x,y)\coloneqq \begin{cases} u_T(x,y_\eta^+) &\text{ if } y\ge y^+_\eta\\
u_T(x,y) &\text{ if } y_{\eta}^-\le y \le y_\eta^+ \\
u_T(x,y_\eta^-) &\text{ if } y\le y^-_\eta
\end{cases}\]
Let $L_\eta$ be the inclusion length related to $\mathcal{T}_{\eta}$ and let $S > T + L_\eta$. Define 
\[\mathcal{I}_S = \mathbb{Z}^d\cap \Big[0,\Big\lfloor\frac{S}{T+L_\eta}\Big\rfloor-1\Big)^d\]
and, for every $\ell \in \mathcal{I}_S$, choose 
\begin{equation*}
    \tau_\ell\in \big((T+L_\eta)\ell + [0,L_\eta]^d)\big) \cap \mathcal T_\eta,
\end{equation*}
and the related $z_\ell$.
We then define a new test function by
\begin{equation*}
    u_S(x,y) = 
    \begin{cases}
    \widetilde{u}_T(x-\tau_\ell,y- z_\ell) &(x,y) \in \big(\tau_\ell + (0,T)^d\big)\times(-h,h)\\
    0, & \text{otherwise},
    \end{cases}
\end{equation*}
for every $(x,y)\in (0,S)^d\times(-h,h)$.

Note that $u_S$ is equal to zero on the set
\begin{equation*}
    Q_S \coloneqq \Big((0,S)^d \setminus \bigcup_{\ell \in \mathcal{I}_S}\tau_\ell + (0,T)^d\Big) \times(-h,h).
\end{equation*}
We have
\begin{equation*}
    |Q_S| \leq 2hS^d\bigg(1-\Big(\frac{T}{T+ L_ \eta} - \frac{T}{S}\Big)^d\bigg).
\end{equation*}
We can now estimate $g_A(S)$ using (\ref{eq:SliceEstimate1}) and (\ref{eq:SliceEstimate2}) 
\begin{equation*}
  \begin{split}
           2S^d g_A(S)  &\leq \int_{(0,S)^d \times (-h,h)}f\big(x,y, A+\nabla_xu_S(x,y),\partial_yu_S(x,y)\big)\,\mathrm dx\,\mathrm dy\\
          & = \sum_{\ell\in \mathcal{I}_S} \int_{(\tau_\ell + (0,T)^d)\times (-h,h)}f\big(x,y,A+\nabla_x\widetilde{u}_T(x-\tau_\ell,y- z_\ell),\partial_y\widetilde{u}_T(x-\tau_\ell,y- z_\ell)\big)\,\mathrm dx\,\mathrm dy \\ &\mkern90mu + \int_{Q_S}f(x,A)\,\mathrm dx\,\mathrm dy\\
          & \leq \sum_{\ell\in \mathcal{I}_S} \int_{ (0,T)^d\times (-h,h)}f\big(\tau_\ell+x,z_\ell+y, A+\nabla_x {u}_T(x,y),\partial_y{u}_T(x,y)\big)\,\mathrm dx\,\mathrm dy\\
          &\mkern90mu +\Big\lfloor\frac{S}{T+L_\eta}\Big\rfloor^d \frac{2 \beta C_T}{\alpha|\log(\delta/\eta)|} + \Big\lfloor\frac{S}{T+L_\eta}\Big\rfloor^d 2T^d\beta(\delta+\eta) + \int_{Q_S}f(x,A)\,\mathrm dx\,\mathrm dy
          \end{split}
\end{equation*}

Now, exploiting the almost-periodicity of $f$ we get
\begin{equation*}
    \begin{split}
         2S^d g_A(S)  &\leq \Big\lfloor\frac{S}{T+L_\eta}\Big\rfloor^d \bigg( \int_{ (0,T)^d\times (-h,h)}f\Big(x,y, \big(A+\nabla_x {u}_T(x,y)\big|\partial_y{u}_T(x,y)\Big)\,\mathrm dx\,\mathrm dy\\
          & \mkern90mu + \eta \int_{ (0,T)^d\times (-h,h)} 1 + \Big|\big(A+\nabla_xu_T(x,y)\big|\partial_yu_T(x,y)\big)\Big|^p\,\mathrm dx\,\mathrm dy\bigg)\\
          &\mkern90mu +\Big\lfloor\frac{S}{T+L_\eta}\Big\rfloor^d \frac{2 \beta C_T}{\alpha|\log(\delta/\eta)|} + \Big\lfloor\frac{S}{T+L_\eta}\Big\rfloor^d 2T^d\beta(\delta+\eta) + \int_{Q_S}f(x,A)\,\mathrm dx\,\mathrm dy
    \end{split}
\end{equation*}
Using the p-growth conditions
\begin{equation*}
    \begin{split}
         2S^d g_A(S)  &\leq \Big(1+\frac{\eta}{\alpha}\Big)\Big\lfloor\frac{S}{T+L_\eta}\Big\rfloor^d{ \int_{ (0,T)^d\times (-h,h)}f\Big(x,y, \big(A+\nabla_x {u}_T(x,y)\big|\partial_y{u}_T(x,y)\Big)\,\mathrm dx\,\mathrm dy}\\
          &+2\eta h T^d\Big\lfloor\frac{S}{T+L_\eta}\Big\rfloor^d  +\Big\lfloor\frac{S}{T+L_\eta}\Big\rfloor^d \frac{2\beta C_T}{\alpha |\log(\delta/\eta)|} + \Big\lfloor\frac{S}{T+L_\eta}\Big\rfloor^d 2T^d\beta(\delta+\eta)\\
          & \mkern90mu  + 2\beta h S^d\bigg(1-\Big(\frac{T}{T+ L_\eta} - \frac{T}{S}\Big)^d\bigg) \big(1+|A|\big)^p
    \end{split}
\end{equation*}
We now exploit   $C_T/(2T^d)\le g_A(T)+1/T$
\begin{equation*}
    \begin{split}
         g_A(S) &\leq \Big(\frac{T}{T+L_\eta}\Big)^d\Big(1+\frac{\eta}{\alpha}\Big)\Big(1+\frac{2\beta}{\alpha|\log (\delta/\eta)|}\Big)\Big(g_A(T)+\frac{1}{T}\Big)\\
          & +\Big(\eta h + \beta(\delta+\eta)\Big)\Big(\frac{T}{T+L_\eta}\Big)^d + \beta h \bigg(1-\Big(\frac{T}{T+ L_\eta} - \frac{T}{S}\Big)^d\bigg) \big(1+|A|\big)^p
    \end{split}
\end{equation*}
Taking the limit, first as $S \to \infty$ and then as $T \to \infty$
\begin{equation*}
         \limsup_{S\to+\infty} g_A(S) \leq \Big(1+\frac{\eta}{\alpha}\Big)\Big(1+\frac{2\beta}{\alpha|\log (\delta/\eta)|}\Big)\liminf_{T\to+\infty} g_A(T)+ \eta h + \beta(\delta+\eta)
\end{equation*}
We first send $\eta \to 0$ and obtain
\begin{equation*}
         \limsup_{S\to+\infty} g_A(S) \leq \liminf_{T\to+\infty}  g_A(T) + \beta\delta,
\end{equation*}
and then conclude by the arbitrariness of $\delta$.
\end{proof}

Eventually, it suffices to remark that Proposition \ref{existence-formula}  implies that $f_0(A)$ is independent on the subsequence $\{\eps_k\}$,  so that we simultaneously have the existence of the limit as $\e\to0$ and the representation \eqref{eq:ffhom}, which concludes the proof of the theorem.

\bigskip
{\bf Acknowledgments.} Andrea Braides is a member of GNAMPA, INdAM.

\end{document}